\newtheorem{theorem}{Theorem}
\newtheorem{lemma}[theorem]{Lemma}
\newtheorem*{definition}{Definition}
\newtheorem*{remark}{Remark}
\DeclareMathOperator{\RE}{Re}
\begin{document}
\title{Generalized Toeplitz determinants for Starlike Mappings in Several Complex Variables}
\author{Surya Giri$^{1}$ and S. Sivaprasad Kumar$^*$  }
%\address{School of Computing, Dehradun Institute of Technology, Dehradun, 248009, India}
%\email{}
%\author{Surya Giri Mahant}
%\address{}
%\email{surya}
%\author[$\dagger$]{Surya Giri}
%\author[$\star$]{S. Sivaprasad Kumar}
%\author[$\ddag$]{J. Jones}

%\affil[$\dagger$,$\star$]{Department of Applied Mathematics, Delhi Technological University, Delhi–110042, India}\\
%\vspace{0.5cm}
%\vspace{0.5cm}

  %\textit{E-mail address}, R.~Campbell: \texttt{campr@galois.psu.edu}
%\affil[$\star$]{Atmospheric Research Station,
%Pala Lundi, Fiji}
%\affil[$\ddag$]{Department of Philosophy, Freedman College,
%Periwinkle, Colorado 84320}
\date{}

%\title{Bohr-Rogosinski phenomenon for $\mathcal{S}^*(\psi)$ and $\mathcal{C}(\psi)$}
%	\thanks{K. Gangania thanks to University Grant Commission, New-Delhi, India for providing Junior Research Fellowship under }	

%	\author[Kamaljeet]{Kamaljeet Gangania}
%	\address{Department of Applied Mathematics, Delhi Technological University,
%%	\email{gangania.m1991@gmail.com}
	
%	\author[S. Sivaprasad Kumar]{S. Sivaprasad Kumar}
%	\address{Department of Applied Mathematics, Delhi Technological University,
%		Delhi--110042, India}
%	\email{spkumar@dce.ac.in}

\maketitle	

\begin{abstract}
    \noindent This paper establishes sharp bounds for the second and third-order Toeplitz determinants associated with starlike functions $f$ in the unit disk such that $f(z)-z$ has a zero of order $k+1$ at $z=0$. These bounds are further extended to starlike mappings defined on the unit ball in a complex Banach space and on bounded starlike circular domains in $\mathbb{C}^n$. The derived results generalize several known bounds as special cases.
\end{abstract}
\vspace{0.5cm}
	\noindent \textit{Keywords:} Starlike mappings; Toeplitz determinants; Coefficient problems.\\
	\noindent \textit{AMS Subject Classification:} 32H02; 30C45.

\section{Introduction}

    Let $\mathcal{S}$ be the class of analytic univalent  functions $f$ on the open unit disk $\mathbb{U}$ having the normalized form
\begin{equation}
    f(z) =z +\sum_{n=2}^\infty a_n z^n , \quad z \in \mathbb{U}.
\end{equation}
    A well-known subclass of $\mathcal{S}$ is the class of starlike functions, denoted by $\mathcal{S}^*$. A function $f \in \mathcal{S}$ is said to be starlike if and only if
    $$\RE \bigg(\frac{z f'(z)}{f(z)}\bigg) > 0.$$
    Toeplitz matrices and their determinants arise in both pure and applied mathematics. The survey article by Ye and Lim~\cite{YinLim} provides a complete account of their applications in various areas of mathematics. Ali et al.~\cite{AliThoVas} considered the Toeplitz determinants for $f(z)=z+\sum_{n=2}^\infty a_n z^n$, given by
\begin{equation*}
     T_{m,n}(f)= \begin{vmatrix}
	a_n & a_{n+1} & \cdots & a_{n+m-1} \\
	a_{n+1} & a_n & \cdots & a_{n+m-2}\\
	\vdots & \vdots & \ddots & \vdots\\
    a_{n+m-1} & a_{n+m-2} & \cdots & a_n\\
	\end{vmatrix}.
\end{equation*}
   In particular,
   $$  T_{2,2}(f)=  a_2^2 -a_{3}^2  $$
   and
\begin{equation*}\label{T31}
    T_{3,1}(f) =
   \begin{vmatrix}
     1 & a_{2} & a_{3} \\
	 {a}_{2} & 1 & a_{2}\\
	 {a}_3 & {a}_{2} & 1\\
    \end{vmatrix}
   =   1 -  a_3^2 - 2 a_2^2 + a_2^{2} {a}_3 .
\end{equation*}
    Ali et al.~\cite{AliThoVas} derived sharp estimates for $ \vert T_{2,1}(f)\vert$ and $\vert T_{3,1}(f)\vert$ when $f\in \mathcal{S}^*$ and for other subclasses of $\mathcal{S}$. Motivated by this, several authors investigated the same problem for different subclasses  of analytic functions~\cite{AhuKhaRav,GirKum,LecSimBsm}.

     Although estimates for Toeplitz determinants for various subclasses of $\mathcal{S}$ have been established, only a few results are known for biholomorphic mappings in several complex variables. Extending the notion of Toeplitz determinants to higher dimensions, Giri and Kumar~\cite{GirKum3} obtained sharp estimates of second and third-order Toeplitz determinants for a class of biholomorphic mappings on the unit polydisk in $\mathbb{C}^n$ and on the unit ball in a complex Banach space. The same problem was also stuided on bounded starlike circular domains by Giri and Kumar~\cite{GirKum}.

     Let $X$ be a complex Banach space equipped with a norm $\|\cdot \|$ and $\mathbb{B}$ be the unit ball in $X$. The set of all holomorphic mappings from $\mathbb{B}$ into $X$ is denoted by $\mathcal{H}(\mathbb{B})$. If $f\in \mathcal{H}(\mathbb{B})$, then for each integer $k \geq 1$, there exists a bounded symmetric linear mapping
    $$D^k f(z) : \prod_{j=1}^k X \rightarrow X, $$
    called the $k^{th}$ order Fr\'{e}chet derivative of $f$ at $z$. Moreover,  for every $w$ in some neighborhood of $z$, the mapping $f$ can be expressed as
    $$ f(w) = \sum_{k=0}^\infty \frac{1}{k!} D^k f(z) ((w -z)^k ),$$
   where $ D^0 f(z) ((w -z)^0 )= f(z)$
    and for $k \geq 1$,
    $$ D^k f(z)( (w -z)^k) = D^k f(z) \underbrace{( w-z, w-z, \cdots, w-z) }_\text{ k -times}.$$
    On a bounded circular domain $\Omega \subset \mathbb{C}^n$, the first and the $m^{th}$ Fr\'{e}chet derivative of a holomorphic mapping $f : \Omega \rightarrow X$  are written by
    $ D f(z)$ and $D^m f(z) (a^{m-1},\cdot)$, respectively. The matrix representations are
\begin{align*}
    D f(z) &= \bigg(\frac{\partial f_j}{\partial z_k} \bigg)_{1 \leq j, k \leq n}, \\
    D^m f(z)(a^{m-1}, \cdot) &= \bigg( \sum_{p_1,p_2, \cdots, p_{m-1}=1}^n  \frac{ \partial^m f_j (z)}{\partial z_k \partial z_{p_1} \cdots \partial z_{p_{m-1}}} a_{p_1} \cdots a_{p_{m-1}}   \bigg)_{1 \leq j,k \leq n},
\end{align*}
   where $f(z) = (f_1(z), f_2(z), \cdots f_n(z))'$ and $ a= (a_1, a_2, \cdots a_n)'\in \mathbb{C}^n.$
      Let $L(X,Y)$ denote the set of contiuous linear operators from $X$ into a complex Banach space $Y$. For each $z\in X\setminus\{0\}$, let
   $$ T(z)= \left\{ l_z\in L(X,\mathbb{C}): l_z(z) = \| z\|, \| l_z \| = 1 \right\},$$
    where $L(X, Y)$ denotes the set of continuous linear operators from $X$ into a complex Banach space $Y$.  By the Hahn-Banach theorem, the set $T_z$ is non-empty.

   A mapping $f\in \mathcal{H}(\mathbb{B})$ is said to be biholomorphic if $f(\mathbb{B})$ is a domain and the inverse of $f$ exists and is holomorphic on $f(\mathbb{B})$. If Fr\'{e}chet derivate of $f\in \mathcal{H}(\mathbb{B})$  has a bounded inverse for each $z\in \mathbb{B}$, then $f$ is called locally biholomorphic mapping. Analogues to one dimensional case, $f$ is said to be normalized if $f(0)=0$ and $D f(0)=I$, where $I$ represents the linear identity operator from $X$ into $X$.

\begin{definition}\cite{LinHon}
   Suppose $\mathcal{D}$ is a domain in $X$ containing $0$ and $f: \mathcal{D}\rightarrow X$ be holomorphic. We say that $f$ has a zero of order $k$ at $z=0$ if $f(0)=D f(0)=\cdots=D^{k-1}f(0)=0$ and $D^{k}f(0)=0$, where $k\in \mathbb{N}$.
\end{definition}
   Xu~\cite{Xu} investigated the Fekete-Szeg\"{o} problem for starlike mappings $f$ in higher dimensions, where $f(z)-z$ has a zero of order $k+1$ at $z=0$. Giri and Kumar~\cite{GirKum4} determined the Toeplitz determinants for quasi-convex mappings satisfying the same zero-order condition on the unit ball in $X$ and on the unit polydisk in $\mathbb{C}^n$. In this paper, we establish sharp estimates of $\vert T_{2,2}(f)\vert$ and $\vert T_{3,1}(f)\vert$ for starlike mappings defined on the unit ball in $X$ as well as on bounded starlike circular domains in $\mathbb{C}^n$, under the same zero-order condition at $z=0$. These results generalize previously known work and extend certain one-dimensional results to higher dimensions.

  Let $\mathcal{P}$ be the class of analytic functions $p$ on $\mathbb{U}$ such that $p(0)=1$ and $\RE p(z)>0$ for all $z\in \mathbb{U}$.  We use the following lemmas to prove  main results.
\begin{lemma}\cite{Suf}
    Let $f: \mathbb{B} \rightarrow X$ be a normalized locally biholomorphic mapping. The mapping $f$ is said to be starlike on $\mathbb{B}$ if and only if
    $$ \RE (l_z [Df(z)]^{-1}f(z))> 0, \quad x\in \mathbb{B}\setminus \{0\}, \quad l_z \in T_z. $$
    The class of all starlike mappings on $\mathbb{B}$ is denoted by $\mathcal{S}^*(\mathbb{B})$.
\end{lemma}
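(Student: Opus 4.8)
The plan is to deduce this characterization from the geometric definition of starlikeness by the classical one‑parameter‑semigroup argument (Suffridge). Write $h(z):=[Df(z)]^{-1}f(z)$; since $f$ is normalized, $h$ is holomorphic on $\mathbb{B}$ with $h(0)=0$ and $Dh(0)=I$, so along each complex line $\zeta\mapsto h(\zeta x)$ (with $\|x\|=1$) vanishes to first order at $\zeta=0$. The geometric statement to be proved equivalent to the displayed inequality is: $f$ is biholomorphic onto a domain $f(\mathbb{B})$ starlike about the origin, equivalently $e^{-t}f(\mathbb{B})\subseteq f(\mathbb{B})$ for every $t\ge0$.

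\emph{Necessity.} Assuming $f(\mathbb{B})$ starlike, put $v(z,t):=f^{-1}\!\big(e^{-t}f(z)\big)$. For each $t\ge0$ this is holomorphic on $\mathbb{B}$, maps $\mathbb{B}$ into $\mathbb{B}$, fixes $0$, satisfies $v(z,0)=z$, and obeys the semigroup law $v(v(z,s),t)=v(z,s+t)$. Differentiating $f(v(z,t))=e^{-t}f(z)$ in $t$ at $t=0$ gives $Df(z)\,\partial_t v(z,t)|_{t=0}=-f(z)$, i.e. $\partial_t v(z,t)|_{t=0}=-h(z)$. Fix $z\ne0$ and $l_z\in T_z$, and set $\varphi(t):=\RE\,l_z(v(z,t))$. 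The Schwarz lemma for the ball of a Banach space yields $\|v(z,t)\|\le\|z\|$, hence $\varphi(t)\le\|v(z,t)\|\le\|z\|=\varphi(0)$ for all $t\ge0$, so $\varphi'(0^+)\le0$; since $\varphi'(0)=-\RE\,l_z(h(z))$, we get $\RE\,l_z(h(z))\ge0$. To upgrade this to strict inequality, fix $x$ with $\|x\|=1$, pick $l_x\in T_x$, and observe that $p_x(\zeta):=\zeta^{-1}l_x(h(\zeta x))$ extends holomorphically to $\zeta\in\mathbb{U}$ with $p_x(0)=1$ and $\RE p_x(\zeta)=|\zeta|^{-1}\RE\,l_{\zeta x}(h(\zeta x))\ge0$; by the minimum principle $\RE p_x>0$ throughout $\mathbb{U}$, which gives $\RE\,l_z(h(z))>0$ for all $z\in\mathbb{B}\setminus\{0\}$.

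\emph{Sufficiency.} Assume $\RE\,l_z(h(z))>0$ on $\mathbb{B}\setminus\{0\}$. Since $h$ is holomorphic (hence locally Lipschitz) with $h(0)=0$, the autonomous initial value problem $\partial_t v(z,t)=-h(v(z,t))$, $v(z,0)=z$, has a unique solution depending holomorphically on $z$; the sign condition makes $t\mapsto\|v(z,t)\|$ non-increasing along trajectories, so the flow exists for all $t\ge0$, remains in $\mathbb{B}$, and has $v(0,t)=0$. Then $\partial_t[f(v(z,t))]=Df(v)\,\partial_t v=-f(v(z,t))$, whence $f(v(z,t))=e^{-t}f(z)$; therefore $e^{-t}f(\mathbb{B})=f\big(v(\cdot,t)(\mathbb{B})\big)\subseteq f(\mathbb{B})$ for every $t\ge0$, so $f(\mathbb{B})$ is starlike and $f\in\mathcal{S}^*(\mathbb{B})$.

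I expect the main obstacle to be the Banach‑space analysis underlying both halves: making rigorous the Schwarz‑lemma estimate and the differentiation of $t\mapsto\|v(z,t)\|$ when the supporting functionals in $T_z$ replace the Euclidean gradient, and, in the converse, establishing global existence, $\mathbb{B}$‑invariance, and holomorphy of the semigroup generated by $-h$. These technical points are precisely what is carried out in \cite{Suf}, so for the present paper it is enough to cite that source; the sketch above records how the stated equivalence is recovered.
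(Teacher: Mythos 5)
The paper does not actually prove this statement: it is quoted directly from Suffridge \cite{Suf} (note the wording ``is said to be'' --- the authors are using it as the working definition of $\mathcal{S}^*(\mathbb{B})$), so there is no in-paper proof to compare yours against. Your sketch follows the classical route of the cited source: the semigroup $v(z,t)=f^{-1}(e^{-t}f(z))$ for necessity and integration of the vector field $-h$, $h=[Df]^{-1}f$, for sufficiency. The necessity half is essentially right, including the upgrade from $\RE l_z(h(z))\ge 0$ to strict inequality via the minimum principle applied to $p_x(\zeta)=\zeta^{-1}l_x(h(\zeta x))$; one should just note that every $l_{\zeta x}\in T(\zeta x)$ is of the form $\overline{\zeta}|\zeta|^{-1}l_x$ with $l_x\in T(x)$, so the rotation bookkeeping you use covers all support functionals.

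There is, however, a genuine gap in your sufficiency direction. A starlike mapping is by definition \emph{biholomorphic} onto a domain starlike with respect to the origin, and your argument only produces the inclusion $e^{-t}f(\mathbb{B})\subseteq f(\mathbb{B})$; it never shows that the merely locally biholomorphic $f$ is univalent, which is the substantive part of Suffridge's theorem. The standard repair is to note that the flow $v(\cdot,t)$ is injective (backward uniqueness for the ODE generated by $-h$), that $Dv(0,t)=e^{-t}I$, and that $e^{t}v(\cdot,t)\to f$ locally uniformly as $t\to\infty$, so that $f$ is a locally uniform limit of normalized biholomorphic mappings and is itself biholomorphic by a Cartan/Hurwitz-type argument. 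Without this step the claimed equivalence is not established. The remaining technical points you flag (the Schwarz lemma on the Banach ball, differentiating $t\mapsto\|v(z,t)\|$ through support functionals, global existence and holomorphy of the flow) are correctly identified as the content of \cite{Suf}, and since the paper itself only cites that source, a citation is indeed all that is required here.
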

\begin{lemma}\cite{LiuRen}
  $\Omega \subset \mathbb{C}^n$  is said to be a bounded starlike circular domain if and only if there exists a unique real-valued continuous function
 $\rho: \mathbb{C}^n \rightarrow \mathbb{R},$ called the Minkowski functional of $\Omega$, such that
\begin{enumerate}
    \item $\rho(z) \geq 0$, $z \in \mathbb{C}^n$; \quad $\rho(z) = 0  \Leftrightarrow z=0$,
    \item $\rho(t z) = \vert t\vert , \rho(z)$, $t \in \mathbb{C}$, $z \in \mathbb{C}^n$,
    \item $\Omega = \{ z \in \mathbb{C}^n : \rho(z) < 1 \}$.
\end{enumerate}
   Furthermore, if $\rho(z)$, $z \in \Omega$, belongs to $\mathcal{C}^1$ except on some lower-dimensional manifold $E \subset \mathbb{C}^n$, then $\rho(z)$ satisfies the following properties:
\begin{align}\label{LmEqn}
   & 2 \frac{\partial \rho(z)}{\partial z} z = \rho(z), \quad z \in \mathbb{C}^n \setminus E, \\
    & 2 \frac{\partial \rho(z)}{\partial z} \bigg\vert_{z=z_0} = 1, \quad z_0 \in \partial \Omega \setminus E, \notag\\
    & \frac{\partial \rho(\lambda z)}{\partial z} = \frac{\partial \rho(z)}{\partial z}, \quad \lambda \in (0,\infty),\; z \in \mathbb{C}^n \setminus E, \notag\\
    & \frac{\partial \rho(e^{i \theta} z)}{\partial z} = e^{-i\theta} \frac{\partial \rho(z)}{\partial z}, \quad \theta \in \mathbb{R},\ z \in \mathbb{C}^n \setminus E, \notag
\end{align}
where
   $\frac{\partial \rho(z)}{\partial z} = \left( \frac{\partial \rho(z)}{\partial z_1}, \dots, \frac{\partial \rho(z)}{\partial z_n} \right).$
\end{lemma}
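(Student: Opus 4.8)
The plan is to split the argument into two independent parts: the characterization (the ``if and only if''), and the derivation of the differential identities in \eqref{LmEqn} under the extra $C^1$ hypothesis.

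For the characterization, starting from a bounded starlike circular domain $\Omega$, I would let $\rho$ be its Minkowski gauge, $\rho(z) = \inf\{t > 0 : t^{-1} z \in \Omega\}$ for $z \neq 0$ and $\rho(0) = 0$. Since $\Omega$ is open and contains $0$, it contains a ball $\{\|z\| < r\}$, whence $\rho(z) \le \|z\|/r$; since $\Omega$ is bounded, say $\Omega \subset \{\|z\| < R\}$, one gets $\rho(z) \ge \|z\|/R$, which gives property $(1)$. Property $(2)$ follows by writing $t = |t| e^{i\theta}$ and using circularity of $\Omega$ to absorb the phase $e^{i\theta}$, so that $\rho(tz) = |t|\,\rho(z)$. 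Property $(3)$ is immediate from the definition, and continuity of $\rho$ comes from the fact that each sublevel set $\{\rho < \lambda\} = \lambda\Omega$ is open while $\{\rho \le \lambda\} = \overline{\lambda\Omega}$ is closed (here star-shapedness about $0$ and boundedness are used). Uniqueness is automatic, since any $\rho$ obeying $(1)$--$(3)$ must coincide with $\inf\{\lambda > 0 : z \in \lambda\Omega\}$. For the converse, given $\rho$ satisfying $(1)$--$(3)$, I would check directly that $\Omega := \{\rho < 1\}$ is open (continuity), bounded (as $m := \min_{\|z\| = 1}\rho(z) > 0$ forces $\rho(z) \ge m\|z\|$), star-shaped about $0$ and hence connected (since $\rho(tz) = t\rho(z) \le \rho(z)$ for $t \in [0,1]$), and circular (since $\rho(e^{i\theta}z) = \rho(z)$).

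For the identities in \eqref{LmEqn}, assuming $\rho \in C^1$ off $E$ and using Wirtinger derivatives $\partial/\partial z_j = \tfrac12(\partial_{x_j} - i\,\partial_{y_j})$, the central step is the Euler-type relation $2\,\frac{\partial \rho}{\partial z}(z)\,z = \rho(z)$: I would obtain it by differentiating $\rho(\lambda z) = |\lambda|\,\rho(z)$ in the complex variable $\lambda$ at $\lambda = 1$, where the chain rule gives $\frac{\partial}{\partial \lambda}\rho(\lambda z)\big|_{\lambda = 1} = \sum_j \frac{\partial \rho}{\partial z_j}(z)\,z_j$ (the antiholomorphic part vanishing since $\lambda z_j$ is holomorphic in $\lambda$) and $\frac{\partial}{\partial \lambda}|\lambda|\big|_{\lambda = 1} = \tfrac12$. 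Evaluating this at $z_0 \in \partial\Omega \setminus E$, where $\rho(z_0) = 1$, gives $2\,\frac{\partial \rho}{\partial z}(z_0)\,z_0 = 1$. The last two relations follow by differentiating $\rho(\lambda z) = \lambda \rho(z)$ ($\lambda > 0$) and $\rho(e^{i\theta}z) = \rho(z)$ with respect to $z_j$: the first shows that $\frac{\partial \rho}{\partial z}$ is homogeneous of degree $0$, and the second gives $\frac{\partial \rho}{\partial z}(e^{i\theta}z) = e^{-i\theta}\,\frac{\partial \rho}{\partial z}(z)$.

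I expect the genuine obstacle to lie in the characterization rather than the computations: proving continuity (equivalently lower semicontinuity) of the gauge $\rho$ when $\Omega$ is only assumed star-shaped about $0$ — not convex — requires really using that $0$ is interior and that $\Omega$ is open and bounded, and is the one point where the classical convex-body argument does not transfer verbatim. In the differential part the only delicate point is the Wirtinger bookkeeping, since $\rho$ is real-valued and hence not holomorphic; this is precisely what produces the factor $2$ and the value $\tfrac12$ for $\partial|\lambda|/\partial\lambda$ at $\lambda = 1$ in the Euler identity, from which the boundary identity and the two invariance relations follow mechanically.
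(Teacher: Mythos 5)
This lemma is imported verbatim from Liu--Ren \cite{LiuRen}; the paper offers no proof of it and uses it purely as the definition of the Minkowski functional and its calculus, so there is no internal argument to compare yours against. Your second half --- the derivation of the identities in (\ref{LmEqn}) --- is correct and is the standard computation: differentiating $\rho(\lambda z)=|\lambda|\rho(z)$ in $\lambda$ at $\lambda=1$ with Wirtinger derivatives (the antiholomorphic contribution vanishing, $\partial|\lambda|/\partial\lambda|_{\lambda=1}=\tfrac12$) gives the Euler relation, its restriction to $\partial\Omega$ gives the boundary identity (which, as you implicitly noticed, is misstated in the lemma: it should read $2\,\frac{\partial\rho}{\partial z}\big|_{z=z_0}z_0=1$), and differentiating the two homogeneity relations in $z$ gives the last two lines.

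The genuine gap is in the characterization, precisely at the point you yourself flag and then do not resolve: continuity of the gauge. Your proposed mechanism, $\{\rho\le\lambda\}=\overline{\lambda\Omega}$, is simply false for a general bounded open set that is star-shaped about $0$ and circular. Concretely, in $\mathbb{C}^2$ take a lower semicontinuous but discontinuous function $f:\mathbb{CP}^1\to(0,\infty)$ and set $\Omega=\{z\ne 0:\|z\|<f([z])\}\cup\{0\}$; this is open, bounded, circular and star-shaped about $0$, yet its gauge $\rho(z)=\|z\|/f([z])$ is upper semicontinuous but not lower semicontinuous, and $\{\rho\le 1\}$ is not closed (so it cannot equal $\overline{\Omega}$). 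Thus the ``only if'' direction of the characterization does not follow from openness, boundedness, star-shapedness and circularity alone: either one must build continuity of $\rho$ into the definition of the class of domains (which is effectively what the literature, and this paper, does by treating the lemma as a definition), or one must impose an extra regularity hypothesis on $\partial\Omega$. Your ``if'' direction (given a continuous $\rho$ with (1)--(3), the set $\{\rho<1\}$ is a bounded starlike circular domain) and the uniqueness argument are fine; it is only the existence of a \emph{continuous} $\rho$ for an arbitrary such domain that your proof does not, and cannot by this route, establish.
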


\begin{lemma}\cite{LiuRen}
   Let $\Omega \subset \mathbb{C}^n$ be a bounded starlike circular domain containing $0$, whose Minkowski functional $\rho(z)$ belongs to $C^1$ except on some lower-dimensional manifolds $E \subset \mathbb{C}^n$. Let $f:\Omega \to \mathbb{C}^n$ be a normalized locally biholomorphic mapping. Then $f$ is starlike on $\Omega$ if and only if
  $$\RE \left( \frac{\partial \rho(z)}{\partial z}\,(Df(z))^{-1} f(z) \right) > 0,\quad z \in \Omega \setminus E. $$
  The class of all starlike mappings on $\Omega$ is denoted by $\mathcal{S}^*(\Omega)$.
\end{lemma}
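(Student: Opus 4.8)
\emph{Proof proposal.} The plan is to characterize starlikeness of $f$ through the holomorphic vector field
$$u(z) := (Df(z))^{-1} f(z), \qquad z \in \Omega,$$
which is well defined because $f$ is locally biholomorphic and satisfies $u(0)=0$, $Du(0)=I$ since $f$ is normalized. The geometric content of ``$f$ is starlike'' is that for each $z\in\Omega$ the point $f^{-1}(e^{-t}f(z))$ exists and lies in $\Omega$ for every $t\ge 0$; denoting this curve by $w(z,t)$, one checks that it is precisely the integral curve of $\dot w = -u(w)$ with $w(z,0)=z$, because $\frac{d}{dt}f(w)= -f(w)$ is equivalent to $Df(w)\dot w = -f(w)$. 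The bridge between this flow and the stated analytic inequality is the family of homogeneity identities for $\rho$ recorded in \eqref{LmEqn}, above all $2\,\frac{\partial\rho(z)}{\partial z}\,z=\rho(z)$.

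First I would treat necessity. If $f\in\mathcal{S}^*(\Omega)$, then $e^{-t}f(\Omega)\subseteq f(\Omega)$, so $w(z,t)\in\Omega$ for all $t\ge 0$ and $\rho(w(z,t))$ is non-increasing with $\rho(w(z,t))\to 0$. Differentiating along the flow and using \eqref{LmEqn},
$$\frac{d}{dt}\,\rho(w(z,t)) = 2\,\RE\!\left(\frac{\partial\rho}{\partial z}(w)\,\dot w\right) = -2\,\RE\!\left(\frac{\partial\rho}{\partial z}(w)\,u(w)\right) \le 0,$$
which forces $\RE\big(\frac{\partial\rho}{\partial z}(z)\,u(z)\big)\ge 0$ on $\Omega\setminus E$; strictness for $z\neq 0$ follows because $f$ is an open map, so the image genuinely contracts and the decay of $\rho$ along the flow cannot stall on a full trajectory. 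For sufficiency I would run the same flow starting from the hypothesis $\RE\big(\frac{\partial\rho}{\partial z}(z)\,u(z)\big)>0$ on $\Omega\setminus E$. The displayed computation then gives $\frac{d}{dt}\rho(w(z,t))<0$ wherever the trajectory meets $\Omega\setminus E$, so $\rho(w(z,t))$ strictly decreases, the trajectory stays in $\Omega$ for all $t\ge 0$, and $w(z,t)\to 0$. Hence the time-$t$ map $g_t\colon z\mapsto w(z,t)$ is a holomorphic self-map of $\Omega$ fixing $0$ with $Dg_t(0)=e^{-t}I$, and $f\circ g_t=e^{-t}f$, so $e^{-t}f(\Omega)\subseteq f(\Omega)$; a normal-families/univalence argument built on $Df(0)=I$ and the rotational and radial invariance of $\frac{\partial\rho}{\partial z}$ in \eqref{LmEqn} then upgrades this to: $f$ is biholomorphic on $\Omega$ and $f(\Omega)$ is starlike with respect to $0$.

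The main obstacle is the interface between the infinitesimal inequality and the global statement: showing rigorously that the flow trajectories remain in $\Omega$ and reach $0$ despite the fact that $\rho$ fails to be $C^1$ on the lower-dimensional set $E$, and extracting global univalence of $f$ from the semigroup relation $f\circ g_t=e^{-t}f$. Controlling $\rho$ along radial and rotational directions via $\frac{\partial\rho(\lambda z)}{\partial z}=\frac{\partial\rho(z)}{\partial z}$ and $\frac{\partial\rho(e^{i\theta}z)}{\partial z}=e^{-i\theta}\frac{\partial\rho(z)}{\partial z}$ is the device that lets one localize away from $E$ and close these gaps; in the special case where $\rho$ is a norm, so $\Omega$ is the unit ball, the criterion collapses to that of Lemma 1, with $2\,\frac{\partial\rho(z)}{\partial z}$ playing the role of the support functional $l_z$.
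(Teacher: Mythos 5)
First, a point of order: the paper does not prove this lemma at all --- it is imported verbatim from Liu and Ren \cite{LiuRen} and used as a black box --- so your attempt can only be measured against the standard argument in that reference. Your overall strategy is the right one: this is the classical Suffridge--Gurganus flow characterization, encoding starlikeness in the trajectories of $\dot w=-u(w)$ with $u=(Df)^{-1}f$ and differentiating $\rho$ along them using $2\frac{\partial\rho(z)}{\partial z}z=\rho(z)$. But as written the sketch has several genuine gaps, each of which is an essential ingredient rather than a routine detail.

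(i) In the necessity direction you assert that $\rho(w(z,t))$ is non-increasing and then differentiate. Membership $w(z,t)\in\Omega$ only gives $\rho(w(z,t))<1$, not monotonicity; what is needed is the Schwarz-type lemma for circular domains (a holomorphic self-map $g$ of $\Omega$ with $g(0)=0$ satisfies $\rho(g(z))\le\rho(z)$, proved by restricting to the disk via $\zeta\mapsto g(\zeta z/\rho(z))$), applied to $g_t=f^{-1}\circ(e^{-t}f)$ together with the semigroup property. Without it your derivative computation has no inequality to differentiate. (ii) The upgrade from $\RE(\cdot)\ge 0$ to $\RE(\cdot)>0$ via ``the decay cannot stall'' is not an argument: a non-negative function can vanish at interior points while every trajectory still decays. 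The correct device is to fix $z_0\in\partial\Omega\setminus E$ and note that
$$\zeta\longmapsto \frac{\zeta}{2\frac{\partial\rho(z_0)}{\partial z}\,\bigl(Df(\zeta z_0)\bigr)^{-1}f(\zeta z_0)}$$
is holomorphic on $\mathbb{U}$, equals $1$ at $\zeta=0$, and has non-negative real part, so the minimum principle forces strict positivity; this is exactly the one-variable reduction the present paper itself exploits in the proof of Theorem 5. (iii) In the sufficiency direction, global univalence is waved at with ``a normal-families/univalence argument.'' The actual proof is: if $f(z_1)=f(z_2)$ then $f(w(z_1,t))=f(w(z_2,t))$ for all $t$, both trajectories tend to $0$ where $f$ is locally injective, hence they coincide for large $t$, and backward uniqueness for the ODE (the field $u$ is holomorphic, hence locally Lipschitz) yields $z_1=z_2$. (iv) Invoking the homogeneity identities does not by itself handle the exceptional set $E$: one must argue that $t\mapsto\rho(w(z,t))$ is absolutely continuous (the Minkowski functional is Lipschitz) so that the pointwise differential inequality, valid off a negligible set of times, can be integrated. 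Until these four points are supplied, the proposal is a plausible outline of the known proof rather than a proof.
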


\section{Main Results}
    In this section, we first obtain sharp estimates for $\vert T_{2,2}(f)\vert$ and $\vert T_{3,1}(f)\vert$ for functions $f\in \mathcal{S}^*$ such that $f(z)-z$ has a zero of order $k+1$ at $z=0$. We then extend these results to higher dimensions by deriving the corresponding estimates for the classes $\mathcal{S}^*(\mathbb{B})$ and $\mathcal{S}^*(\Omega)$.
\begin{theorem}\label{thmU}
     If $f(z)=z+\sum_{n=1}^\infty a_{nk+1} z^{nk+1}\in \mathcal{S}^*$, then
     $$ \vert a_{k+1}^2 -  a_{2k+1}^2 \vert \leq \frac{4}{k^2}+\frac{(k+2)^2}{k^4}. $$
     The estimate is sharp.
\end{theorem}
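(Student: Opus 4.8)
The plan is to transfer the problem to the first two Taylor coefficients of the associated Carathéodory function. Because $f\in\mathcal{S}^{*}$ has the lacunary form $f(z)=z+\sum_{n\ge1}a_{nk+1}z^{nk+1}$, the quotient $zf'(z)/f(z)$ is holomorphic on $\mathbb{U}$, equals $1$ at $0$, has positive real part, and is a function of $z^{k}$ alone; hence there is $q\in\mathcal{P}$, $q(w)=1+\sum_{n\ge1}c_{n}w^{n}$, with $zf'(z)/f(z)=q(z^{k})$ (the map $z\mapsto z^{k}$ sends $\mathbb{U}$ onto $\mathbb{U}$, so the positivity of the real part is inherited by $q$). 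First I would substitute this identity into $zf'(z)=q(z^{k})f(z)$ and compare the coefficients of $z^{k+1}$ and $z^{2k+1}$ to get
\[
a_{k+1}=\frac{c_{1}}{k},\qquad a_{2k+1}=\frac{c_{1}^{2}}{2k^{2}}+\frac{c_{2}}{2k}.
\]

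With these relations the bound follows at once from the Carathéodory estimates $|c_{1}|\le2$, $|c_{2}|\le2$: by the triangle inequality $|a_{k+1}^{2}-a_{2k+1}^{2}|\le|a_{k+1}|^{2}+|a_{2k+1}|^{2}$, while $|a_{k+1}|\le2/k$ and $|a_{2k+1}|\le\frac{|c_{1}|^{2}}{2k^{2}}+\frac{|c_{2}|}{2k}\le\frac{2}{k^{2}}+\frac{1}{k}=\frac{k+2}{k^{2}}$, so
\[
\bigl|a_{k+1}^{2}-a_{2k+1}^{2}\bigr|\le\frac{4}{k^{2}}+\frac{(k+2)^{2}}{k^{4}}.
\]
If one prefers to see that this is not lossy, one can instead insert the parametrization $c_{2}=\tfrac12c_{1}^{2}+\tfrac12(4-|c_{1}|^{2})\zeta$ with $|\zeta|\le1$ and $c_{1}=re^{i\alpha}$ kept general, apply the triangle inequality term by term, and verify that the resulting bound, as a function of $s=r^{2}\in[0,4]$, equals $\tfrac{1}{16k^{4}}\bigl(4s^{2}+16k(k+1)s+16k^{2}\bigr)$, which is increasing and hence maximal at $s=4$, again yielding $\tfrac{4}{k^{2}}+\tfrac{(k+2)^{2}}{k^{4}}$.

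For sharpness I would take $f$ defined by $zf'(z)/f(z)=\dfrac{1+iz^{k}}{1-iz^{k}}$, i.e. the starlike mapping with $q(w)=(1+iw)/(1-iw)$, so that $c_{1}=2i$ and $c_{2}=-2$; then $a_{k+1}=2i/k$ and $a_{2k+1}=-(k+2)/k^{2}$, whence $a_{k+1}^{2}-a_{2k+1}^{2}=-\dfrac{4}{k^{2}}-\dfrac{(k+2)^{2}}{k^{4}}$, which has modulus exactly $\dfrac{4}{k^{2}}+\dfrac{(k+2)^{2}}{k^{4}}$. The single point that needs care is the choice of this extremal: one cannot normalize $c_{1}$ to be real here, because $|a_{k+1}^{2}-a_{2k+1}^{2}|$ is \emph{not} invariant under the rotation $f(z)\mapsto e^{-i\theta}f(e^{i\theta}z)$, and indeed the obvious candidate $f(z)=z/(1-z^{k})^{2/k}$ (with $a_{k+1}=2/k$, $a_{2k+1}=(k+2)/k^{2}$) only realizes $\bigl|\tfrac{4}{k^{2}}-\tfrac{(k+2)^{2}}{k^{4}}\bigr|$; equality forces $a_{k+1}^{2}$ and $a_{2k+1}^{2}$ to point in opposite directions, which is exactly what the imaginary unit in the defining function arranges. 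Everything else is the short computation above, so I do not anticipate any serious obstacle beyond correctly identifying this extremal.
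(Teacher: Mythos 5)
Your proposal is correct and follows essentially the same route as the paper: both reduce to the Carath\'eodory function $zf'(z)/f(z)$, obtain $a_{k+1}=p_k/k$ and $a_{2k+1}=(kp_{2k}+p_k^2)/(2k^2)$ (your $c_1,c_2$ are just the $p_k,p_{2k}$ of the paper's $p(z)$), apply $\vert p_n\vert\le 2$ with the triangle inequality, and use the same extremal --- your $f(z)=z/(1-iz^k)^{2/k}$ is the function the paper intends (its printed exponent $1/k$ is a typo, as its stated coefficients $2i/k$ and $-(k+2)/k^2$ show). Your closing observation about why the rotation by $i$ is essential for sharpness is a worthwhile point the paper leaves implicit.
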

\begin{proof}
     Since $f \in \mathcal{S}^*$, there exists a function $p(z)= 1+ \sum_{n=1}^\infty p_n z^n \in \mathcal{P}$ such that
     $$ z f'(z)=f(z) p(z). $$
     Comparison of same powers of $z$ after the series expansions of functions $f$ and $p$ yields
     $$ a_{k+1} = \frac{p_k}{k} \;\; \text{and}\;\; a_{2k+1} =\frac{k p_{2k}+p_k^2}{2 k^2}. $$
     Using the estimate $\vert p_n \vert \leq 2$ $(n \in \mathbb{N})$ for functions $p\in \mathcal{P}$~\cite[Theorem 3.1.2]{ThoB}, we obtain
\begin{equation}\label{a2Thm1}
      \vert a_{k+1}\vert \leq \frac{2}{k} \;\;\text{and}\;\; \vert a_{2k+1} \vert \leq\frac{k+ 2}{k^2}.
\end{equation}
    In view of these estimate, we deduce that
    $$  \vert a_{k+1}^2 -  a_{2k+1}^2 \vert \leq \vert a_{k+1}\vert^2 + \vert a_{2k+1} \vert^2 \vert \leq \frac{4}{k^2}+\frac{(k+2)^2}{k^4}. $$

     To establish the sharpness, consider the function
\begin{equation}\label{ExtThm1}
     f(z)= \frac{z}{(1-i z^k)^{1/k}}= z+ \frac{2 i}{k} z^{k+1}- \frac{k+2}{k^2} z^{2k+1}+ \cdots.
\end{equation}
    It is evident that $f \in \mathcal{S}^*$ and for the mapping $f$, we have
    $$ \vert a_{k+1}^2 -  a_{2k+1}^2 \vert=  \frac{4}{k^2}+\frac{(k+2)^2}{k^4},$$
    thereby completing the proof.
\end{proof}
\begin{theorem}\label{thmU2}
      If $f(z)=z+\sum_{n=1}^\infty a_{nk+1} z^{nk+1}\in \mathcal{S}^*$, then
\begin{equation*}
\begin{aligned}
    \Big\vert 1 -a_{2 k+1}^2 -2 a_{k+1}^2 +2 a_{k+1}^2 a_{2 k+1}  \Big\vert \leq
\left\{
\begin{array}{ll}
   1+ \dfrac{8}{k^2} + \dfrac{(k+2)(6-k)}{k^4};& 1\leq k \leq 3, \\ \ \\
      1+ \dfrac{8}{k^2} + \dfrac{2+k}{k^3}; &  k\geq 3,
\end{array}
\right.
\end{aligned}
\end{equation*}
     The estimate is sharp $k=1,2,3$.
\end{theorem}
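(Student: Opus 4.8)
The plan is to follow the opening of the proof of Theorem~\ref{thmU}. Since $f\in\mathcal{S}^*$ and $f(z)-z$ has a zero of order $k+1$ at $0$, we have $zf'(z)=f(z)p(z)$ with $p\in\mathcal{P}$; because $f(z)/z$ is a power series in $z^k$, so is $p$, and hence we may write $p(z)=1+p_kz^k+p_{2k}z^{2k}+\cdots=P(z^k)$ with $P\in\mathcal{P}$. Comparing coefficients gives, exactly as before, $a_{k+1}=p_k/k$ and $a_{2k+1}=(kp_{2k}+p_k^2)/(2k^2)$. Abbreviate $a:=a_{k+1}$, $b:=a_{2k+1}$, and let $E:=1-b^2-2a^2+2a^2b$.

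The algebraic heart of the argument is the identity
$$ E=(1-2a^2)+b\,(2a^2-b), $$
so that $|E|\le|1-2a^2|+|b|\,|2a^2-b|$, and I bound the three ingredients separately. First, $|1-2a^2|\le 1+2|a|^2\le 1+8/k^2$, using $|a_{k+1}|\le 2/k$ from \eqref{a2Thm1}. Second, $|b|\le (k+2)/k^2$, again from \eqref{a2Thm1}. Third, a short computation gives $2a^2-b=\dfrac{1}{2k^2}\bigl(3p_k^2-kp_{2k}\bigr)$, hence $|2a^2-b|=\dfrac{1}{2k}\bigl|p_{2k}-\tfrac{3}{k}p_k^2\bigr|$, to which I apply the Fekete--Szeg\"{o}-type estimate $|P_2-\mu P_1^2|\le 2\max\{1,|2\mu-1|\}$, valid for $P\in\mathcal{P}$, with $P_1=p_k$, $P_2=p_{2k}$ and $\mu=3/k$.

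The case distinction is now forced by whether $\mu=3/k$ exceeds $1$. If $k\ge 3$, then $0<3/k\le 1$, so $|p_{2k}-\tfrac3kp_k^2|\le 2$, giving $|2a^2-b|\le 1/k$ and $|E|\le 1+8/k^2+(k+2)/k^3$. If $1\le k\le 3$, then $3/k\ge 1$, so $|p_{2k}-\tfrac3kp_k^2|\le 2(6/k-1)$, giving $|2a^2-b|\le (6-k)/k^2$ and $|E|\le 1+8/k^2+(k+2)(6-k)/k^4$; the two bounds coincide at $k=3$. For sharpness when $k\in\{1,2,3\}$ I take $f(z)=z/(1-iz^k)^{2/k}$, for which $p(z)=(1+iz^k)/(1-iz^k)$, so $p_k=2i$ and $p_{2k}=-2$; then $a=2i/k$, $b=-(k+2)/k^2$, $1-2a^2=1+8/k^2>0$ and $b(2a^2-b)=(k+2)(6-k)/k^4\ge 0$, so $E$ is real and equals the claimed right-hand side, forcing equality throughout.

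The main obstacle is discovering the decomposition $E=(1-2a^2)+b(2a^2-b)$: writing $E$ so that it isolates the functional $p_{2k}-\tfrac3kp_k^2$ is precisely what makes the Fekete--Szeg\"{o} threshold $\mu=1$ correspond to $k=3$ and thereby produces the two-case estimate, whereas a naive application of $|p_n|\le 2$ to each monomial of $E$ is strictly weaker. A secondary technical point is verifying that the single candidate extremal attains equality in all three of the bounds simultaneously and with compatible phases; this works because forcing $|a_{k+1}|=2/k$ with $1-2a^2$ maximal pins down $p_k=\pm 2i$, hence $P$, hence $p_{2k}$, and the resulting $1-2a^2$ and $b(2a^2-b)$ turn out to be nonnegative reals.
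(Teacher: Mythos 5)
Your proof is correct and follows essentially the same route as the paper: your decomposition $E=(1-2a^2)+b\,(2a^2-b)$ is exactly the paper's triangle-inequality estimate $\vert 1 -a_{2 k+1}^2 -2 a_{k+1}^2 +2 a_{k+1}^2 a_{2 k+1}\vert \le 1+2\vert a_{k+1}\vert^2+\vert a_{2k+1}\vert\,\vert a_{2k+1}-2a_{k+1}^2\vert$, and the bound on $\vert 2a^2-b\vert$ that you extract from the classical Carath\'eodory Fekete--Szeg\"{o} inequality coincides with the bound the paper imports from Xu's theorem (inequality \eqref{a3a2Thm2}). The only differences are minor: you re-derive that coefficient bound from scratch rather than citing it, and you write the extremal function with exponent $2/k$, which is in fact the exponent that produces the coefficients $a_{k+1}=2i/k$ and $a_{2k+1}=-(k+2)/k^2$ claimed by the paper for its function \eqref{ExtThm1} (whose displayed exponent $1/k$ appears to be a typo).
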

\begin{proof}
    For $\lambda \in \mathbb{C}$ and $f\in \mathcal{S}^*$, Xu~\cite[Theorem 3.1]{Xu} derived that
\begin{equation*}
    \left\vert  a_{2 k+1}- \lambda a_{k+1}^2\right\vert \leq \frac{1}{k} \max\left\{1,  \frac{\vert 2+ k - 4 \lambda\vert}{k} \right\},
\end{equation*}
    which immediately provides
\begin{equation}\label{a3a2Thm2}
    \left\vert  a_{2 k+1}-2 a_{k+1}^2\right\vert \leq \frac{1}{k} \max\left\{1,  \frac{\vert  k - 6 \vert}{k} \right\}.
\end{equation}
     Applying the bounds from (\ref{a2Thm1}) and (\ref{a3a2Thm2}) to the inequality
     $$ \vert 1 -a_{2 k+1}^2 -2 a_{k+1}^2 +2 a_{k+1}^2 a_{2 k+1}  \vert \leq 1+ 2 \vert a_{k+1}\vert^2 + \vert a_{2 k+1}\vert \vert a_{2 k+1}-2 a_{k+1}^2\vert, $$
     we obtain the desired result.

     The equality case is attained for the function $f$ defined by (\ref{ExtThm1}).  For this mapping, we have
     $$ \vert 1 -a_{2 k+1}^2 -2 a_{k+1}^2 +2 a_{k+1}^2 a_{2 k+1}  \vert  = 1+ \frac{8}{k^2} + \frac{(k+2)(6-k)}{k^4}, $$
     which shows that the bounds is sharp for $k=1,2,3$.
\end{proof}  
\begin{remark}
   For $k=1$, Theorem~\ref{thmU2} reduces to~\cite[Theorem 2.1(ii)]{AliThoVas}.
\end{remark}
\begin{theorem}\label{thm3}
    Let  $f \in \mathcal{H}(\mathbb{B}, \mathbb{C})$  and $F(z) =  z f(z) \in \mathcal{S}^*(\mathbb{B})$. If $F(z)-z$ has a zero of order $k+1$ at $z=0$,
   then
\begin{equation*}
   \Big\vert \bigg(\frac{ l_z (D^{2k+1} F(0) (z^{2k+1}))}{(2k+1)! \vert\vert z \vert\vert^{2k+1}}\bigg)^2  - \bigg(  \frac{ l_z (D^{k+1} F(0) (z^{k+1}))}{(k+1)! \|z\|^{k+1}}\bigg)^2  \Big\vert \leq \frac{(k+2)^2}{k^4}+\frac{4}{k^2}.%\frac{5 k^2+4 k+4}{k^4}.
\end{equation*}
   The bound is sharp.
\end{theorem}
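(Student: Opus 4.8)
The plan is to reduce this $n$-dimensional estimate to the one–dimensional bound of Theorem~\ref{thmU} by restricting $F$ to complex lines through the origin and exploiting the product structure $F(z)=zf(z)$. Fix $z\in\mathbb{B}\setminus\{0\}$, put $u=z/\|z\|$ (so $\|u\|=1$ and $l_z(u)=1$ for every $l_z\in T_z$), and set $g(\zeta):=l_z\big(F(\zeta u)\big)$ for $\zeta\in\mathbb{U}$. Since $F(\zeta u)=(\zeta u)f(\zeta u)=\zeta f(\zeta u)\,u$, we get $g(\zeta)=\zeta f(\zeta u)$, which is holomorphic on $\mathbb{U}$ with $g(0)=0$ and $g'(0)=f(0)=1$ (the normalization $DF(0)=I$ forces $f(0)=1$). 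Because $F(w)-w$ has a zero of order $k+1$ at $w=0$, the chain rule gives $\frac{d^{\,j}}{d\zeta^{\,j}}\big(F(\zeta u)-\zeta u\big)\big|_{\zeta=0}=D^{j}(F-\mathrm{id})(0)(u^{j})=0$ for $0\le j\le k$, so $g(\zeta)-\zeta=l_z\big(F(\zeta u)-\zeta u\big)$ vanishes to order at least $k+1$; write $g(\zeta)=\zeta+\sum_{m\ge k+1}b_m\zeta^m$. Comparing $F(\zeta u)=\sum_{m\ge1}\frac{\zeta^m}{m!}D^mF(0)(u^m)$ with $g(\zeta)u$, applying $l_z$, and using $l_z(u)=1$ and $z=\|z\|u$, one reads off
\[
 b_{k+1}=\frac{l_z\big(D^{k+1}F(0)(z^{k+1})\big)}{(k+1)!\,\|z\|^{k+1}},\qquad
 b_{2k+1}=\frac{l_z\big(D^{2k+1}F(0)(z^{2k+1})\big)}{(2k+1)!\,\|z\|^{2k+1}},
\]
so the quantity to be bounded is exactly $\big|\,b_{2k+1}^{\,2}-b_{k+1}^{\,2}\,\big|$.

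The decisive step is to show that $g$ is a starlike function. Differentiating $F(z)=f(z)z$ gives $DF(z)(z)=\big(Df(z)(z)+f(z)\big)z$, i.e.\ $DF(z)$ sends $z$ to a scalar multiple of itself; since $DF(z)$ is invertible, $f(z)+Df(z)(z)\neq0$ and
\[
 [DF(z)]^{-1}F(z)=\frac{f(z)}{\,f(z)+Df(z)(z)\,}\,z .
\]
Applying $l_z$ and invoking the starlikeness criterion (the Suffridge-type lemma), $\RE\!\big(f(z)/(f(z)+Df(z)(z))\big)>0$, equivalently $\RE\!\big(1+Df(z)(z)/f(z)\big)>0$, for all $z\neq0$ (in particular $f$ is zero-free on $\mathbb{B}$, so this makes sense). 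Restricting to $z=\zeta u$ and writing $\phi(\zeta)=f(\zeta u)$, so that $g(\zeta)=\zeta\phi(\zeta)$, we obtain $1+\dfrac{Df(\zeta u)(\zeta u)}{f(\zeta u)}=1+\dfrac{\zeta\phi'(\zeta)}{\phi(\zeta)}=\dfrac{\zeta g'(\zeta)}{g(\zeta)}$; hence $p(\zeta):=\zeta g'(\zeta)/g(\zeta)$ satisfies $p(0)=1$ and $\RE p>0$, that is $p\in\mathcal{P}$ and $\zeta g'(\zeta)=g(\zeta)p(\zeta)$. Thus $g\in\mathcal{S}^{*}$ with $g-\mathrm{id}$ vanishing to order at least $k+1$, and the coefficient identities used in the proof of Theorem~\ref{thmU} (which only invoke $\zeta g'=gp$) yield $b_{k+1}=p_k/k$ and $b_{2k+1}=(kp_{2k}+p_k^{2})/(2k^{2})$; by $|p_n|\le2$ this gives, as in \eqref{a2Thm1}, $|b_{k+1}|\le 2/k$ and $|b_{2k+1}|\le (k+2)/k^{2}$. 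Therefore
\[
 \big|\,b_{2k+1}^{\,2}-b_{k+1}^{\,2}\,\big|\le|b_{k+1}|^{2}+|b_{2k+1}|^{2}\le \frac{4}{k^{2}}+\frac{(k+2)^{2}}{k^{4}},
\]
which is the asserted bound.

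For sharpness I would fix $u_0\in\partial\mathbb{B}$ and $l_0\in T_{u_0}$, and take $F(z)=z\big(1-i(l_0(z))^{k}\big)^{-2/k}$, the higher-dimensional analogue of the extremal in \eqref{ExtThm1}. With $f(z)=\big(1-i(l_0(z))^{k}\big)^{-2/k}$, a short computation gives $1+Df(z)(z)/f(z)=\dfrac{1+i(l_0(z))^{k}}{1-i(l_0(z))^{k}}$, which has positive real part since $|l_0(z)|\le\|z\|<1$; hence $F\in\mathcal{S}^{*}(\mathbb{B})$, and $F(z)-z$ has a zero of order $k+1$. Choosing $z=ru_0$ with $r\in(0,1)$, the line-restriction is $g(\zeta)=\zeta(1-i\zeta^{k})^{-2/k}$, exactly the extremal of Theorem~\ref{thmU}, for which $b_{k+1}=2i/k$ and $b_{2k+1}=-(k+2)/k^{2}$; along this line the displayed expression equals $\big|\,b_{2k+1}^{\,2}-b_{k+1}^{\,2}\,\big|=(k+2)^{2}/k^{4}+4/k^{2}$, so equality is attained.

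The main obstacle is the reduction in the second paragraph: one must see that the product form $F(z)=f(z)z$ makes $[DF(z)]^{-1}F(z)$ collapse onto the $z$-direction, so that the Banach-space starlikeness criterion degenerates, on each complex line, into the classical condition $\RE(\zeta g'/g)>0$. Everything after that is the one-dimensional Theorem~\ref{thmU} plus bookkeeping; one minor point to watch is that $g$ need not have only powers $\equiv1\pmod k$, so it is the coefficient identities inside the proof of Theorem~\ref{thmU}, rather than its verbatim statement, that should be cited.
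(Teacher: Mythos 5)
Your proposal is correct and follows essentially the same route as the paper: both hinge on the Pfaltzgraff--Suffridge collapse $[DF(z)]^{-1}F(z)=\frac{f(z)}{f(z)+Df(z)z}\,z$, which produces the same Carath\'eodory function (your $p(\zeta)=\zeta g'(\zeta)/g(\zeta)$ is exactly the paper's $q(\zeta)$), the same coefficient bounds $2/k$ and $(k+2)/k^2$, and the same extremal mapping. The only cosmetic difference is that you package the reduction as ``$g=l_z(F(\zeta u))$ is a one-dimensional starlike function'' and rederive the second bound from $|p_n|\le 2$, whereas the paper works with the Fr\'echet-derivative quotients directly and quotes Xu's Fekete--Szeg\H{o} inequality with $\lambda=0$; your closing caveat about intermediate powers of $g$ is handled correctly, since the vanishing of $b_2,\dots,b_k$ forces $p_1=\dots=p_{k-1}=0$.
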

\begin{proof}
     Let $z_0 = \frac{z}{\|z \|}$ for  fix $z\in X\setminus \{ 0 \}$. Let us define $ q : \mathbb{U} \rightarrow \mathbb{C}$ such that
\begin{equation*}
    q(\zeta) = \left\{ \begin{array}{ll}
     \dfrac{\zeta}{ l_z ((D F(\zeta z_0))^{-1} F( \zeta z_0) )}, & \zeta \neq 0, \\ \\
    1, & \zeta =0.
    \end{array}
    \right.
\end{equation*}
   Then $q \in \mathcal{H}(\mathbb{U})$ with $q(0)=1$. Further, we have
\begin{align*}
   q(\zeta) =& \frac{\zeta}{l_z ((D F(\zeta z_0))^{-1} F( \zeta z_0) ) }\\
            = &\frac{\zeta}{l_{z_0} ((D F(\zeta z_0))^{-1} F( \zeta z_0) ) } \\
             =& \frac{\| \zeta z_0 \| }{l_{ \zeta z_0} ((D F(\zeta z_0))^{-1} F( \zeta z_0) ) } , \quad  \zeta \in \mathbb{U}.
\end{align*}
  Since $\RE((D F(z))^{-1} F(z) )>0$, we get
\begin{equation}
   \RE q(\zeta)> 0.
\end{equation}
   By employing the same methodology as in~\cite{PfaSuf}, we obtain
   $$ (D F(z))^{-1}F(z)= z \left( \frac{1}{1+\frac{Df(z)z}{f(z)}}  \right)= \frac{ z f(z)}{f(z)+ D f(z) z},  \quad z\in \mathbb{B},$$
   which further yields
\begin{equation}\label{eqn1}
    \frac{\|z \|}{l_z ((DF(z))^{-1} F(z))}= 1 + \frac{D f(z) z}{f(z)}.
\end{equation}
    In view of (\ref{eqn1}), we obtain
\begin{equation}\label{accr}
    q(\zeta) = \frac{\| \zeta z_0 \| }{l_{ \zeta z_0} ((DF(\zeta z_0))^{-1} F( \zeta z_0) ) }  =  1 + \frac{D f(\zeta z_0)\zeta z_0}{f(\zeta z_0)}.
\end{equation}
   With the Taylor series expansions of $q(\zeta)$ and $f(\zeta z_0)$, the above equation leads to
\begin{align*}
   \bigg (1 + q'(0) &\zeta  + \frac{q''(0)}{2} \zeta^2 + \cdots \bigg)\bigg( 1 + \frac{D^k f(0)(z_0^k)}{k!} \zeta^k +  \frac{D^{k+1} f(0)(z_0^{k+1})}{(k+1)!} \zeta^k + \cdots \bigg)  \\
    &=\bigg( 1 + \frac{D^k f(0)(z_0^k)}{k!} \zeta^k +  \frac{D^{k+1} f(0)(z_0^{k+1})}{(k+1)!} \zeta^k + \cdots \bigg) +\bigg( \frac{D^k f(0)(z_0^k)}{(k-1)!} \zeta^k \\
   &\quad\quad\;\;+  \frac{D^{k+1} f(0)(z_0^{k+1})}{k!} \zeta^{k+1} + \cdots \bigg).
\end{align*}
  On comparing the homogeneous terms of identical degree on both sides, we get
\begin{align*}
    \frac{q^{(k)}(0)}{k!}= \frac{k D^k f(0) (z_0^k)}{k!}, \quad  \frac{q^{(2k)}(0)}{(2k)!}=2k \frac{ D^{2k} f(0) (z_0^{2k})}{(2k)!}- k \bigg( \frac{ D^{k} f(0) (z_0^{k})}{k!}\bigg)^2.
\end{align*}
   That is
\begin{equation}\label{eqthree}
    \frac{q^{(k)}(0)\|z\|^k}{k!}= \frac{k D^k f(0) (z)}{k!}, \; \frac{q^{(2k)}(0)\|z\|^{2k}}{(2k)!}=2k \frac{ D^{2k} f(0) (z^{2k})}{(2k)!}- k \bigg( \frac{ D^{k} f(0) (z^{k})}{k!}\bigg)^2.
\end{equation}
    From the relation $F(z)=z f(z)$, we have
\begin{equation*}
    \frac{D^{k+1} F(0)(z^{k+1})}{(k+1)!}= \frac{D^{k} f(0) (z^{k})}{k!}z, \; \frac{D^{2k+1} F(0)(z^{2k+1})}{(2k+1)!}= \frac{D^{2k} f(0) (z^{2k})}{(2k)!}z,
\end{equation*}
    which immediately yields
\begin{equation}\label{eqfour}
     \frac{l_z (D^{k+1} F(0)(z^{k+1}))}{(k+1)!\|z\|}= \frac{D^{k} f(0) (z^{k})}{k!}, \;\; \frac{l_z (D^{2k+1} F(0)(z^{2k+1}))}{(2k+1)!\|z\|}= \frac{D^{2k} f(0) (z^{2k}) }{(2k)!} .
\end{equation}
   From (\ref{eqthree}) and (\ref{eqfour}), it follows that
   $$ \frac{l_z (D^{k+1} F(0)(z^{k+1}))}{(k+1)!\|z\|^{k+1}}= \frac{1}{k}\frac{q^{(k)}(0)}{k!}.$$
    Applying the bound from~\cite[Theorem 3.1.2]{ThoB} for $q\in \mathcal{P}$, we obtain
\begin{equation}\label{A2B}
   \bigg\vert  \frac{l_z (D^{k+1} F(0)(z^{k+1}))}{(k+1)!\|z\|^{k+1}} \bigg\vert \leq \frac{2}{k}.
\end{equation}
   For $\lambda \in \mathbb{C}$ and $z \in \mathbb{B}\setminus\{0\}$, Xu~\cite{Xu} established that
\begin{equation}\label{CFFSBC}
   \Big\vert  \frac{ l_z (D^{2k+1} F(0) (z^{2k+1}))}{(2k+1)! \vert\vert z \vert\vert^{2k+1}} -  \lambda \Big(\frac{ l_z (D^{k+1} F(0) (z^{k+1}))}{(k+1)! \vert\vert z \vert\vert^{k+1}} \Big)^2 \Big\vert \leq \frac{1}{k} \max \Big\{ 1,  \frac{\vert 2+k - 4 \lambda  \vert}{k } \Big\}.
\end{equation}
   From (\ref{CFFSBC}), we directly deduce that
\begin{equation}\label{3rd}
   \bigg\vert  \frac{ l_z (D^{2k+1} F(0) (z^{2k+1}))}{(2k+1)! \vert\vert z \vert\vert^{2k+1}} \bigg\vert \leq  \frac{ 2+ k   }{k^2 }.
\end{equation}
   Applying the bounds given in (\ref{A2B}) and (\ref{3rd}) to the inequality
\begin{align*}
      \bigg\vert \bigg(\frac{ l_z (D^{2k+1} F(0) (z^{2k+1}))}{(2k+1)! \vert\vert z \vert\vert^{2k+1}}&\bigg)^2 - \bigg(  \frac{ l_z (D^{k+1} F(0) (z^{k+1}))}{(k+1)! \|z\|^{k+1}}\bigg)^2  \bigg\vert \\
      & \leq \bigg\vert \frac{ l_z (D^{2k+1} F(0) (z^{2k+1}))}{(2k+1)! \vert\vert z \vert\vert^{2k+1}}\bigg\vert^2  + \bigg\vert  \frac{ l_z (D^{k+1} F(0) (z^{k+1}))}{(k+1)! \|z\|^{k+1}}  \bigg\vert^2,
\end{align*}
    the stated bound follows immediately.

    The bound is sharp for the mapping
\begin{equation}\label{ExtG}
    F(z)= \frac{z}{(1- i (l_u (z))^k )^{2/k}}, \quad z\in \mathbb{B}, \quad \|u\|=1.
\end{equation}
   From~\cite{LiuLiu}, we have $F\in \mathcal{S}^*(\mathbb{B})$ and $F(z)-z$ has a zero of order $k+1$ at $z=0$. Further, a simple calcultion yields
   $$ \frac{D^{k+1} F(0) (z^{k+1})}{(k+1)! }= \frac{2 i (l_u(z))^k}{k}z \;\;\text{and} \;\; \frac{ D^{2k+1} F(0) (z^{2k+1})}{(2k+1)! }=- \frac{(k+2) (l_u(z))^{2 k}}{k^2}z. $$
   In view of these equations, we have
   $$ \frac{l_z (D^{k+1} F(0) (z^{k+1}))}{(k+1)! }= \frac{2 i (l_u(z))^k}{k}\|z\|$$
    and
    $$\frac{l_z( D^{2k+1} F(0) (z^{2k+1}))}{(2k+1)! }=- \frac{(k+2) (l_u(z))^{2 k}}{k^2}\|z\|, $$
   respectively. By taking $z=ru$, we get
\begin{equation}\label{aaa}
    \frac{l_z (D^{k+1} F(0) (z^{k+1}))}{(k+1)! }=   \frac{2 i}{k} \;\;\text{and} \;\; \frac{l_z( D^{2k+1} F(0) (z^{2k+1}))}{(2k+1)! }= - \frac{(k+2) }{k^2}.
\end{equation}
    Using these values, we obtain
   $$ \bigg\vert \bigg(\frac{ l_z (D^{2k+1} F(0) (z^{2k+1}))}{(2k+1)! \vert\vert z \vert\vert^{2k+1}}\bigg)^2 - \bigg(  \frac{ l_z (D^{k+1} F(0) (z^{k+1}))}{(k+1)! \|z\|^{k+1}}\bigg)^2  \bigg\vert = \frac{(k+2)^2}{k^4}+\frac{4}{k^2}, $$
   which confirms the sharpness of the result.
\end{proof}
\begin{remark}
    When $X=\mathbb{C}$ and $\mathbb{B}=\mathbb{U}$, Theorem~\ref{thm3} is equivalent to Theorem~\ref{thmU}.
\end{remark}
\begin{theorem}\label{thmB1}
   Let  $f \in \mathcal{H}(\mathbb{B}, \mathbb{C})$  and $F(z) =  z f(z) \in \mathcal{S}^*(\mathbb{B})$. If $F(z)-z$ has a zero of order $k+1$ at $z=0$,
   then
\begin{equation*}
\begin{aligned}
    \Big\vert  1 -a_{2 k+1}^2 -2 a_{k+1}^2 +2 a_{k+1}^2 a_{2 k+1}  \Big\vert \leq
\left\{
\begin{array}{ll}
   1+ \dfrac{8}{k^2} + \dfrac{(k+2)(6-k)}{k^4};& 1\leq k \leq 3, \\ \ \\
      1+ \dfrac{8}{k^2} + \dfrac{2+k}{k^3}; &  k\geq 3,
\end{array}
\right.
\end{aligned}
\end{equation*}
  %  $$ \vert T_{3,1}(F) \vert \leq 1+ \frac{8}{k^2} + \frac{2+k}{k^3} \max \Big\{ 1,  \frac{\vert k - 6  \vert}{k } \Big\},$$
    where
\begin{equation}\label{a2a3}
   a_{2k+1} = \frac{ l_z (D^{2k+1} F(0) (z^{2k+1}))}{(2k+1)! \vert\vert z \vert\vert^{2k+1}} \;\;\text{and}\;\; a_{k+1} =  \frac{ l_z (D^{k+1} F(0) (z^{k+1}))}{(k+1)! \|z\|^{k+1}}.
\end{equation}
   The bound is sharp for $k=1,2,3$.
\end{theorem}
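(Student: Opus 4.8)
\section*{Proof proposal for Theorem~\ref{thmB1}}

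The plan is to mirror the argument used for Theorem~\ref{thm3}, except that now we extract a Fekete--Szeg\"o type quantity rather than a sum of two squares. First I would fix $z\in X\setminus\{0\}$, put $z_0=z/\|z\|$, and introduce the scalar function $q(\zeta)=\|\zeta z_0\|/l_{\zeta z_0}\big((DF(\zeta z_0))^{-1}F(\zeta z_0)\big)$ for $\zeta\neq 0$ and $q(0)=1$. Exactly as in the proof of Theorem~\ref{thm3}, starlikeness of $F$ forces $\RE q(\zeta)>0$, so $q\in\mathcal{P}$; comparing Taylor coefficients in the identity $q(\zeta)=1+Df(\zeta z_0)\zeta z_0/f(\zeta z_0)$, together with the relations induced by $F(z)=zf(z)$, yields that $a_{k+1}$ and $a_{2k+1}$ defined in~(\ref{a2a3}) are expressible through $q^{(k)}(0)$ and $q^{(2k)}(0)$. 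Since this setup is precisely the one already carried out for Theorem~\ref{thm3}, in practice I would simply invoke the three inequalities established there: the coefficient bound $|a_{k+1}|\leq 2/k$ from~(\ref{A2B}), the bound $|a_{2k+1}|\leq(k+2)/k^2$ from~(\ref{3rd}), and Xu's Fekete--Szeg\"o inequality~(\ref{CFFSBC}), which for $\lambda=2$ gives $\big|a_{2k+1}-2a_{k+1}^2\big|\leq\frac{1}{k}\max\{1,\,|k-6|/k\}$.

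The algebraic core is the factorisation
$$ 1-a_{2k+1}^2-2a_{k+1}^2+2a_{k+1}^2a_{2k+1}=1-2a_{k+1}^2-a_{2k+1}\big(a_{2k+1}-2a_{k+1}^2\big), $$
which by the triangle inequality gives
$$ \big|1-a_{2k+1}^2-2a_{k+1}^2+2a_{k+1}^2a_{2k+1}\big|\leq 1+2|a_{k+1}|^2+|a_{2k+1}|\,\big|a_{2k+1}-2a_{k+1}^2\big|. $$
I would then substitute the three bounds above. The only point needing care is the value of $\max\{1,|k-6|/k\}$: for $1\leq k\leq 3$ one has $(6-k)/k\geq 1$, so the maximum equals $(6-k)/k$ and the right-hand side becomes $1+8/k^2+(k+2)(6-k)/k^4$; for $k\geq 3$ one has $|k-6|/k\leq 1$ throughout (both when $3\leq k\leq 6$ and when $k>6$), so the maximum is $1$ and the right-hand side becomes $1+8/k^2+(k+2)/k^3$. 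A short check shows the two branches agree at $k=3$, so the piecewise bound is well defined, and this reproduces the stated estimate; it is the natural higher-dimensional companion of Theorem~\ref{thmU2}.

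For sharpness when $k=1,2,3$, I would reuse the extremal mapping $F(z)=z/(1-i(l_u(z))^k)^{2/k}$ with $\|u\|=1$ from~(\ref{ExtG}); by~\cite{LiuLiu} it lies in $\mathcal{S}^*(\mathbb{B})$ and $F(z)-z$ has a zero of order $k+1$ at $0$, and the computation already performed for Theorem~\ref{thm3}, recorded in~(\ref{aaa}), gives $a_{k+1}=2i/k$ and $a_{2k+1}=-(k+2)/k^2$ on taking $z=ru$. Plugging these in yields $1-a_{2k+1}^2-2a_{k+1}^2+2a_{k+1}^2a_{2k+1}=1+8/k^2+(k+2)(6-k)/k^4$, which is real and positive for $k=1,2,3$, so its modulus equals the claimed bound on the first branch. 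As for the main obstacle, there is essentially no new analytic difficulty beyond what Theorem~\ref{thm3} already supplies: the auxiliary function $q$, its membership in $\mathcal{P}$, the coefficient identities, and the inequality~(\ref{CFFSBC}) are all in place. The one genuinely delicate spot is handling the $\max$ in~(\ref{CFFSBC}) correctly across the two ranges of $k$ and confirming the branches match at $k=3$; everything else is a substitution into the triangle inequality.
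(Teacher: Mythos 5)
Your proposal is correct and follows essentially the same route as the paper: invoke the bounds $|a_{k+1}|\leq 2/k$, $|a_{2k+1}|\leq (k+2)/k^2$ and Xu's inequality with $\lambda=2$, apply the triangle inequality to $1-2a_{k+1}^2-a_{2k+1}(a_{2k+1}-2a_{k+1}^2)$, and verify sharpness with the mapping in~(\ref{ExtG}). Your added checks (the two branches of the $\max$ agreeing at $k=3$, and the extremal value being real and positive) are correct and slightly more careful than the paper's own write-up.
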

\begin{proof}
    From (\ref{CFFSBC}), it follows directly that
\begin{equation}\label{FSS}
   \Big\vert  \frac{ l_z (D^{2k+1} F(0) (z^{2k+1}))}{(2k+1)! \vert\vert z \vert\vert^{2k+1}} -  2 \Big(\frac{ l_z (D^{k+1} F(0) (z^{k+1}))}{(k+1)! \vert\vert z \vert\vert^{k+1}} \Big)^2 \bigg\vert
    \leq \frac{1}{k} \max \Big\{ 1,  \frac{\vert k - 6  \vert}{k } \Big\} .
\end{equation}
   The triangle inequality combined with $(\ref{T31})$ gives
   $$\vert T_{3,1}(F) \vert \leq 1+ 2 \vert a_{k+1}\vert^2+\vert a_{2k+1}\vert \vert a_{2k+1} - 2 a_{k+1}^2 \vert , $$
   where $a_{2k+1}$  and $a_{k+1}$ are given as in (\ref{a2a3}). Incorporating the estimates from (\ref{A2B}), (\ref{3rd}) and ({\ref{FSS}}) into the above inequality yields the required bound.

  When $1\leq k\leq 3$, the bound is sharp for the mapping $F$ defined by (\ref{ExtG}). For this mapping, it follows from (\ref{aaa}) that
   $$ a_{k+1}=   \frac{2 i}{k} \;\;\text{and} \;\; a_{2k+1}= - \frac{(k+2) }{k^2}. $$
   Consequently, a direct calculation yields
   $$   1 -a_{2 k+1}^2 -2 a_{k+1}^2 +2 a_{k+1}^2 a_{2 k+1} = 1+ \frac{8}{k^2} + \frac{(k+2)(6-k)}{k^4}, $$
   thereby establishing sharpness for $k=1,2,3$.
\end{proof}
\begin{remark}
    When $X=\mathbb{C}$ and $\mathbb{B}=\mathbb{U}$, Theorem~\ref{thmB1} reduces to Theorem~\ref{thmU2}.
\end{remark}
\begin{theorem}\label{thm5}
   Let  $f \in \mathcal{H}(\Omega, \mathbb{C})$  and $F(z) =  z f(z) \in \mathcal{S}^*(\Omega)$. If $F(z)-z$ has a zero of order $k+1$ at $z=0$,
   then
\begin{equation*}
   \bigg\vert \bigg(2 \frac{\partial \rho(z)}{\partial z}\frac{D^{2k+1} F(0)(z^{2k+1})}{(2k+1)! \rho^{2k+1}(z)}\bigg)^2 -\bigg( 2 \frac{\partial \rho(z)}{\partial z}\frac{D^{k+1} F(0)(z^{k+1})}{(k+1)! \rho^{k+1}(z)} \bigg)^2\bigg\vert \leq  \frac{(k+2)^2}{k^4} + \frac{4}{k^2}.
\end{equation*}
    The estimate is sharp.
\end{theorem}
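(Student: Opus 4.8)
The strategy is to carry over the proof of Theorem~\ref{thm3} to the domain $\Omega$, replacing the supporting functional $l_z$ and the norm $\|\cdot\|$ by $2\,\partial\rho(z)/\partial z$ and the Minkowski functional $\rho$, and invoking the starlikeness criterion for $\mathcal{S}^*(\Omega)$ stated above together with the relations collected in \eqref{LmEqn}. Write
\[
c_{k+1}:=2\,\frac{\partial\rho(z)}{\partial z}\,\frac{D^{k+1}F(0)(z^{k+1})}{(k+1)!\,\rho^{k+1}(z)},\qquad
c_{2k+1}:=2\,\frac{\partial\rho(z)}{\partial z}\,\frac{D^{2k+1}F(0)(z^{2k+1})}{(2k+1)!\,\rho^{2k+1}(z)},
\]
so that the left-hand side of the asserted inequality is $|c_{2k+1}^2-c_{k+1}^2|$.

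First I would fix $z\in\Omega\setminus E$, put $z_0=z/\rho(z)$ (then $z_0\in\partial\Omega\setminus E$, since by homogeneity $z\notin E$ forces $z_0\notin E$ and $\rho(z_0)=1$), and define $q:\mathbb{U}\to\mathbb{C}$ by
\[
q(\zeta)=\frac{\zeta}{2\,\frac{\partial\rho(z_0)}{\partial z}\,(DF(\zeta z_0))^{-1}F(\zeta z_0)}\quad(\zeta\neq0),\qquad q(0)=1.
\]
Since $F=zf$, the computation from \cite{PfaSuf} gives $(DF(w))^{-1}F(w)=wf(w)/(f(w)+Df(w)w)$; combining this with $2\,(\partial\rho(w)/\partial z)\,w=\rho(w)$ and the homogeneity relations for $\partial\rho/\partial z$ in \eqref{LmEqn} one rewrites
\[
q(\zeta)=\frac{\rho(\zeta z_0)}{2\,\frac{\partial\rho(\zeta z_0)}{\partial z}\,(DF(\zeta z_0))^{-1}F(\zeta z_0)}=1+\frac{Df(\zeta z_0)\,\zeta z_0}{f(\zeta z_0)},
\]
which is the $\Omega$-analogue of \eqref{eqn1} and \eqref{accr}. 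From the right-hand side, $q$ is holomorphic on $\mathbb{U}$ with $q(0)=1$; and the criterion $\RE\big((\partial\rho(w)/\partial z)\,(DF(w))^{-1}F(w)\big)>0$, together with $\rho>0$, forces $\RE q(\zeta)>0$ whenever $\zeta z_0\in\Omega\setminus E$, hence — by continuity of $q$ and the minimum principle for the harmonic function $\RE q$, using $q(0)=1$ — on all of $\mathbb{U}$. Thus $q\in\mathcal{P}$.

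Next I would expand $q(\zeta)=1+\sum_{n\ge k}q_n\zeta^n$ and, since $F(z)-z$ vanishes to order $k+1$ at $0$ (so $f(0)=1$ and $f(w)-1$ vanishes to order $k$), write $f(\zeta z_0)=1+\sum_{n\ge k}\frac{D^nf(0)(z_0^n)}{n!}\zeta^n$ and $Df(\zeta z_0)\,\zeta z_0=\zeta\,\frac{d}{d\zeta}f(\zeta z_0)$. Comparing the coefficients of $\zeta^k$ and $\zeta^{2k}$ in $q(\zeta)f(\zeta z_0)=f(\zeta z_0)+Df(\zeta z_0)\,\zeta z_0$ yields
\[
\frac{D^kf(0)(z_0^k)}{k!}=\frac{q_k}{k},\qquad \frac{D^{2k}f(0)(z_0^{2k})}{(2k)!}=\frac{q_k^{2}+k\,q_{2k}}{2k^{2}}.
\]
On the other hand $F=zf$ gives $\frac{D^{k+1}F(0)(z^{k+1})}{(k+1)!}=\frac{D^{k}f(0)(z^{k})}{k!}\,z$ and $\frac{D^{2k+1}F(0)(z^{2k+1})}{(2k+1)!}=\frac{D^{2k}f(0)(z^{2k})}{(2k)!}\,z$; applying $2\,\partial\rho(z)/\partial z$, using $2\,(\partial\rho(z)/\partial z)z=\rho(z)$ and the degree-$m$ homogeneity $D^{m}f(0)(z^{m})=\rho(z)^{m}D^{m}f(0)(z_0^{m})$, one finds $c_{k+1}=q_k/k$ and $c_{2k+1}=(q_k^{2}+k\,q_{2k})/(2k^{2})$ — precisely the coefficient formulas of Theorem~\ref{thmU} with $p$ replaced by $q\in\mathcal{P}$. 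Hence $|q_n|\le2$ gives $|c_{k+1}|\le 2/k$ and $|c_{2k+1}|\le(k+2)/k^{2}$, so that $|c_{2k+1}^2-c_{k+1}^2|\le|c_{k+1}|^2+|c_{2k+1}|^2\le \frac{4}{k^{2}}+\frac{(k+2)^2}{k^{4}}$. For sharpness I would take the $\Omega$-version of \eqref{ExtG}, namely $F(z)=z\big(1-i\,(2\,(\partial\rho(z_0)/\partial z)z)^{k}\big)^{-2/k}$ for a fixed $z_0\in\partial\Omega\setminus E$, which belongs to $\mathcal{S}^*(\Omega)$ with $F(z)-z$ vanishing to order $k+1$ at $0$ (this being the standard extension of the one-variable starlike function $\zeta(1-i\zeta^k)^{-2/k}$ via the linear functional $2\,(\partial\rho(z_0)/\partial z)z$; cf.\ \cite{GirKum}); evaluating along $z=r z_0$ and using $2\,(\partial\rho(z_0)/\partial z)z_0=\rho(z_0)=1$ gives $c_{k+1}=2i/k$, $c_{2k+1}=-(k+2)/k^{2}$, so equality holds.

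The main obstacle is the bookkeeping with the Minkowski functional: deriving $(DF)^{-1}F=wf/(f+Df\,w)$ and the second form of $q$ (the passage from $\partial\rho(z_0)/\partial z$ to $\partial\rho(\zeta z_0)/\partial z$ through the homogeneity relations in \eqref{LmEqn}), and — more delicately — justifying that $\RE q>0$ on the whole disk and that the extremal mapping lies in $\mathcal{S}^*(\Omega)$, given that $\rho$ is only $C^1$ away from the lower-dimensional set $E$. Once these points are settled, the estimate is identical, coefficient for coefficient, to the computations already carried out in Theorems~\ref{thmU} and~\ref{thm3}.
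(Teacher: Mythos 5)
Your proposal matches the paper's proof in all essentials: the same reduction to a one-variable Carath\'eodory function via $z_0=z/\rho(z)$ and the Pfaltzgraff--Suffridge identity $(DF)^{-1}F=zf/(f+Df\,z)$, the same two coefficient bounds $2/k$ and $(k+2)/k^2$, the same triangle inequality $|c_{2k+1}^2-c_{k+1}^2|\le|c_{2k+1}|^2+|c_{k+1}|^2$, and an extremal map of the same form. The only cosmetic differences are that the paper obtains the bound on $c_{2k+1}$ by quoting Xu's Fekete--Szeg\H{o} inequality with $\lambda=0$ rather than deriving the coefficient formula $c_{2k+1}=(q_k^2+kq_{2k})/(2k^2)$ directly, and it takes the extremal mapping to be $F(z)=z\big(1-i(z_1/r)^k\big)^{-2/k}$ (with membership in $\mathcal{S}^*(\Omega)$ cited from Liu--Liu) instead of your functional-based variant.
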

\begin{proof}
   For $z\in \Omega\setminus E$, let $z_0= \frac{z}{\rho(z)}$. Consider the function $h : \mathbb{U}\rightarrow \mathbb{C}$ defined by
\begin{equation*}
    h(\zeta) = \left\{ \begin{array}{ll}
     \dfrac{\zeta }{2 \frac{\partial \rho(z_0)}{\partial z} (D F(\zeta z_0))^{-1} F(\zeta z_0)}, & \zeta \neq 0, \\ \\
    1, & \zeta =0.
    \end{array}
    \right.
\end{equation*}
     Then $h \in \mathcal{H}(\mathbb{U})$ and $h(0)=1$. Since $F\in \mathcal{S}^*(\Omega)$, we have
\begin{align*}
    \RE (h(\zeta))&= \RE \bigg(   \frac{\zeta }{2 \frac{\partial \rho(z_0)}{\partial z} (D F(\zeta z_0))^{-1} F(\zeta z_0)}\bigg) \\
                 &= \RE \bigg(   \frac{\rho(\zeta z_0 )}{2 \frac{\partial \rho(\zeta z_0)}{\partial z} (D F(\zeta z_0))^{-1} F(\zeta z_0)}\bigg) > 0, \quad \zeta \in \mathbb{U}.
\end{align*}
   Proceeding as in the proof of Theorem~\ref{thmB1}, we obtain
    $$ (D F(z))^{-1}F(z)= z \left( \frac{1}{1+\frac{Df(z)z}{f(z)}}  \right)= \frac{ z f(z)}{f(z)+ D f(z) z},  \quad z\in \Omega \setminus\{ 0\},$$
   which implies that
\begin{equation*}\label{accr2}
    \frac{\rho(z) }{ 2 \frac{\partial \rho( z)}{\partial z} (DF( z))^{-1} F(  z)  }  =  1 + \frac{D f(z)z}{f(z)}, \quad z\in \Omega\setminus E.
\end{equation*}
   From~(\ref{accr2}), we have
   $$  h(\zeta) =  \frac{\rho (\zeta z_0) }{2 \frac{\partial \rho(\zeta z_0)}{\partial z} (D F(\zeta z_0))^{-1} F(\zeta z_0)} =  1 + \frac{D f( \zeta z_0)\zeta z_0}{f(\zeta z_0)}. $$
   Comparison of the same homogeneous expansions terms expanding in powers of $\zeta$ yields
\begin{equation*}
    \frac{ h^{(k)}(0)}{k!}= \frac{k D^k f(0) (z_0^k)}{k!},% \quad  \frac{h^{(2k)}(0)}{(2k)!}=2k \frac{ D^{2k} f(0) (z_0^{2k})}{(2k)!}- k \bigg( \frac{ D^{k} f(0) (z_0^{k})}{k!}\bigg)^2.
\end{equation*}
    which further gives
\begin{equation}\label{handF}
    \frac{ D^k f(0) (z^k)}{k!}= \frac{1}{k}\frac{ h^{(k)}(0) \rho^k(z)}{k!}.% \quad  \frac{h^{(2k)}(0)}{(2k)!}=2k \frac{ D^{2k} f(0) (z_0^{2k})}{(2k)!}- k \bigg( \frac{ D^{k} f(0) (z_0^{k})}{k!}\bigg)^2.
\end{equation}
   Moreover, from the relation $F(z)=z f(z)$, it follows that
\begin{equation}\label{eqnnew}
     \frac{D^{k+1} F(0)(z^{k+1})}{(k+1)!}= \frac{D^{k} f(0) (z^{k})}{k!} z. %   \frac{D^{2k+1} F(0)(z^{2k+1})}{(2k+1)!}= \frac{D^{2k} f(0) (z^{2k})}{(2k)!} z, \;\;
\end{equation}
    From~(\ref{LmEqn}) and (\ref{eqnnew}), we have
%\begin{equation}
 %   2 \frac{\partial \rho(z)}{\partial z}  \frac{D^{2k+1} F(0)(z^{2k+1})}{(2k+1)!} =  \frac{D^{2k} f(0) (z^{2k}) \rho(z)}{(2k)!}
%\end{equation}
%   and
\begin{equation}\label{Fandrho}
    2 \frac{\partial \rho(z)}{\partial z}  \frac{D^{k+1} F(0)(z^{k+1})}{(k+1)!} =  \frac{D^{k} f(0) (z^{k}) \rho(z)}{k!} .
\end{equation}
   Combining (\ref{handF}) with (\ref{Fandrho}), we deduce that
\begin{equation}
      2 \frac{\partial \rho(z)}{\partial z}  \frac{D^{k+1} F(0)(z^{k+1})}{(k+1)! \rho^{k+1}(z)} = \frac{1}{k}\frac{ h^{(k)}(0) }{k!}.
\end{equation}
   Using the bound from~\cite[Theorem 3.1.2]{ThoB} for the Carath\'{e}odory function $h$, we get
\begin{equation}\label{b2}
    \bigg\vert 2 \frac{\partial \rho(z)}{\partial z}  \frac{D^{k+1} F(0)(z^{k+1})}{(k+1)! \rho^{k+1}(z)}\bigg\vert \leq \frac{2}{k}.
\end{equation}
   For $\lambda \in \mathbb{C}$, Xu~\cite[Theorem 3.4]{Xu} proved that
\begin{equation}\label{b33}
\begin{aligned}
\left.
\begin{array}{ll}
    \bigg\vert 2 \dfrac{\partial \rho(z)}{\partial z}\dfrac{D^{2k+1} F(0)(z^{2k+1})}{(2k+1)! \rho^{2k+1}(z)} &- \lambda \bigg( 2 \dfrac{\partial \rho(z)}{\partial z}\dfrac{D^{k+1} F(0)(z^{k+1})}{(k+1)! \rho^{k+1}(z)} \bigg)^2 \bigg\vert\\
  &\leq \dfrac{1}{k} \max\bigg\{ 1, \dfrac{\vert 2+ k - 4 \lambda \vert}{k} \bigg\},  \quad z \in \Omega \setminus E.
\end{array}
\right\}
\end{aligned}
\end{equation}
   As a direct consequence, the above inequality readily yields
\begin{equation}\label{b3}
       \Bigg\vert  2 \frac{\partial \rho(z)}{\partial z}\frac{D^{2k+1} F(0)(z^{2k+1})}{(2k+1)! \rho^{2k+1}(z)} \Bigg\vert \leq \frac{k+2}{k^2}.
\end{equation}
   The bounds given in (\ref{b2}) and (\ref{b3}), together with the inequality
\begin{align*}
    \bigg\vert \bigg( 2 \frac{\partial \rho(z)}{\partial z} &\frac{D^{2k+1} F(0)(z^{2k+1})}{(2k+1)! \rho^{2k+1}(z)} \bigg)^2 -  \bigg( 2 \frac{\partial \rho(z)}{\partial z}\frac{D^{k+1} F(0)(z^{k+1})}{(k+1)! \rho^{k+1}(z)} \bigg)^2 \bigg\vert \\
      &\quad\quad\quad \quad  \leq \bigg\vert  2 \frac{\partial \rho(z)}{\partial z}\frac{D^{2k+1} F(0)(z^{2k+1})}{(2k+1)! \rho^{2k+1}(z)} \bigg\vert^2 +  \bigg\vert 2 \frac{\partial \rho(z)}{\partial z}\frac{D^{k+1} F(0)(z^{k+1})}{(k+1)! \rho^{k+1}(z)} \bigg\vert^2,
\end{align*}
   lead to the required  result.

    To complete the sharpness part, we consider the mapping
\begin{equation}\label{ExtOmega}
   F(z)= \frac{z}{\Big(1 - i \left(\dfrac{z_1}{r} \right)^k \Big)^{\frac{2}{k}}}, \quad z\in \Omega,
\end{equation}
   where $r=\sup\{\vert z_1\vert : z=(z_1,0,\cdots,0)'\in \Omega \}$. According to~\cite{LiuLiu2}, the mapping given by~(\ref{ExtOmega}) belongs to $\mathcal{S}^*(\Omega)$ and $z=0$ is the zero of order $k+1$ of $F(z)-z$. For this mapping, we have
    $$  \frac{D^{k+1}  F(0) (z^{k+1})}{(k+1)!}= i\Big(\frac{2}{k}\Big)  \Big(\frac{z_1}{r}\Big)^k z \;\; \text{and} \;\; \frac{D^{2k+1}  F(0) (z^{2k+1})}{(2k+1)!} = - \Big(\frac{k+2}{k^2}\Big) \Big(\frac{z_1}{r}\Big)^{2k} z. $$
  %  The above equations readily yields
    Applying (\ref{LmEqn}) in the above equations, we get
\begin{align*}
    2 \frac{\partial \rho}{\partial z}  \frac{D^{k+1}  F(0) (z^{k+1})}{(k+1)! } = i\Big(\frac{2}{k}\Big)  \Big(\frac{z_1}{r}\Big)^k \rho (z)
\end{align*}
    and
\begin{align*}
      2 \frac{\partial \rho}{\partial z} \frac{D^{2k+1}  F(0) (z^{2k+1})}{(2k+1)!} = - \Big(\frac{k+2}{k^2}\Big) \Big(\frac{z_1}{r}\Big)^{2k} \rho(z),
\end{align*}
   respectively.  Taking $z = R u$ $(0< R <1)$, where $u =(u_1, u_2, \cdots, u_n)' \in \partial \Omega$ and $u_1 =r$, we obtain
\begin{align}\label{b2H3}
     2 \frac{\partial \rho}{\partial z}  \frac{D^{k+1}  F(0) (z^{k+1})}{(k+1)! \rho^{k+1} (z)} = i\Big(\frac{2}{k}\Big)
\end{align}
   and
\begin{align}\label{b3H3}
        2 \frac{\partial \rho}{\partial z} \frac{D^{2k+1}  F(0) (z^{2k+1})}{(2k+1)! \rho^{2k+1} (z)} = - \Big(\frac{k+2}{k^2}\Big) .
\end{align}
   Consequently, in view of (\ref{b2H3}) and (\ref{b3H3}), we have
\begin{align*}
      \bigg\vert   \bigg(  2 \frac{\partial \rho}{\partial z} \frac{D^{2k+1}  F(0) (z^{2k+1})}{(2k+1)! \rho^{2k+1} (z)} \bigg)^2 -  \bigg(2 \frac{\partial \rho}{\partial z}  \frac{D^{k+1}  F(0) (z^{k+1})}{(k+1)! \rho^{k+1} (z)} \bigg)^2  \bigg\vert    =   \frac{(k+2)^2}{k^4}+ \frac{4}{k^2},
\end{align*}
    which establishes the sharpness of the bound.
\end{proof}
\begin{remark}
    When $n=1$ and $\Omega=\mathbb{U}$, Theorem~\ref{thm5} is equivalent to Theorem~\ref{thmU}.
\end{remark}
\begin{theorem}\label{thm6}
Let  $f \in \mathcal{H}(\Omega, \mathbb{C})$  and $F(z) =  z f(z) \in \mathcal{S}^*(\Omega)$. If $F(z)-z$ has a zero of order $k+1$ at $z=0$,
   then
\begin{equation*}
\begin{aligned}
    \Big\vert  1 -a_{2 k+1}^2 -2 a_{k+1}^2 +2 a_{k+1}^2 a_{2 k+1}  \Big\vert \leq
\left\{
\begin{array}{ll}
   1+ \dfrac{8}{k^2} + \dfrac{(k+2)(6-k)}{k^4};& 1\leq k \leq 3, \\ \ \\
      1+ \dfrac{8}{k^2} + \dfrac{2+k}{k^3}; &  k\geq 3,
\end{array}
\right.
\end{aligned}
\end{equation*}
    where
\begin{equation}\label{b2b3}
   a_{2k+1} =  2 \frac{\partial \rho(z)}{\partial z}\frac{D^{2k+1} F(0)(z^{2k+1})}{(2k+1)! \rho^{2k+1}(z)} \;\;\text{and}\;\; a_{k+1} = 2 \frac{\partial \rho(z)}{\partial z}\frac{D^{k+1} F(0)(z^{k+1})}{(k+1)! \rho^{k+1}(z)}.
\end{equation}
   The bound is sharp for $k=1,2,3$.
\end{theorem}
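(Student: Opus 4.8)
The plan is to transcribe the proof of Theorem~\ref{thmB1}, replacing every functional attached to the unit ball by its counterpart on the bounded starlike circular domain $\Omega$ and using the quantities $a_{k+1}$, $a_{2k+1}$ introduced in (\ref{b2b3}). First I would specialize the Fekete--Szeg\"{o} inequality (\ref{b33}) of Xu to $\lambda = 2$, obtaining
\begin{equation*}
   \left\vert a_{2k+1} - 2 a_{k+1}^2 \right\vert \leq \frac{1}{k}\max\left\{1, \frac{\vert k-6\vert}{k}\right\}.
\end{equation*}
Next I would rewrite the determinant $T_{3,1}(F) = 1 - a_{2k+1}^2 - 2 a_{k+1}^2 + 2 a_{k+1}^2 a_{2k+1}$ as $1 - 2 a_{k+1}^2 - a_{2k+1}\bigl(a_{2k+1} - 2 a_{k+1}^2\bigr)$, so that the triangle inequality, together with (\ref{T31}), gives $\vert T_{3,1}(F)\vert \leq 1 + 2\vert a_{k+1}\vert^2 + \vert a_{2k+1}\vert\,\vert a_{2k+1} - 2 a_{k+1}^2\vert$, which is the same reduction used in Theorem~\ref{thmB1}.

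Then I would substitute the three ingredients: $\vert a_{k+1}\vert \leq 2/k$ from (\ref{b2}), $\vert a_{2k+1}\vert \leq (k+2)/k^2$ from (\ref{b3}), and the displayed Fekete--Szeg\"{o} bound above. The only step needing a case distinction is the evaluation of $\max\{1, \vert k-6\vert/k\}$: since $(6-k)/k \geq 1$ precisely when $k \leq 3$ and $(k-6)/k < 1$ for every $k \geq 1$, the maximum equals $(6-k)/k$ on $1 \leq k \leq 3$ and equals $1$ on $k \geq 3$. Feeding this back yields $1 + 8/k^2 + (k+2)(6-k)/k^4$ in the first range and $1 + 8/k^2 + (k+2)/k^3$ in the second, which is exactly the claimed estimate.

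For sharpness I would take the mapping $F$ of (\ref{ExtOmega}); by (\ref{b2H3}) and (\ref{b3H3}) it gives $a_{k+1} = 2i/k$ and $a_{2k+1} = -(k+2)/k^2$ when $a_{k+1}, a_{2k+1}$ are read off as in (\ref{b2b3}). A direct substitution then produces $1 - a_{2k+1}^2 - 2 a_{k+1}^2 + 2 a_{k+1}^2 a_{2k+1} = 1 + 8/k^2 + (k+2)(6-k)/k^4$, which is a positive real number, so its modulus meets the first branch of the bound and equality holds for $k = 1,2,3$. I do not anticipate any genuine obstacle: the argument is essentially a copy of the proof of Theorem~\ref{thmB1}, and the only points that require care are the elementary inequality distinguishing the two cases of the maximum and the observation that the extremal mapping attains precisely the $1 \leq k \leq 3$ branch, which is why sharpness is asserted only there.
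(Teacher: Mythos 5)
Your proposal is correct and follows essentially the same route as the paper: specializing Xu's inequality (\ref{b33}) to $\lambda=2$, bounding $\vert T_{3,1}(F)\vert$ by $1+2\vert a_{k+1}\vert^2+\vert a_{2k+1}\vert\,\vert a_{2k+1}-2a_{k+1}^2\vert$ with the estimates (\ref{b2}) and (\ref{b3}), and verifying sharpness on the mapping (\ref{ExtOmega}). Your explicit case analysis of $\max\{1,\vert k-6\vert/k\}$ and the remark on why sharpness is confined to $1\leq k\leq 3$ merely make explicit what the paper leaves implicit.
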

\begin{proof}   In view of (\ref{b33}) and (\ref{b2b3}), we immediately get
\begin{equation}\label{b3b22}
    \Big\vert a_{2k+1}- 2 a_{k+1}^2  \Big\vert\leq \frac{1}{k} \max\Big\{ 1, \frac{\vert k - 6 \vert}{k} \Big\}.
\end{equation}
    Substituting the bounds from (\ref{b2}), (\ref{b3}) and (\ref{b3b22}) into the inequality
\begin{align*}
  \Big\vert  1 -a_{2 k+1}^2 -2 a_{k+1}^2 +2 a_{k+1}^2 a_{2 k+1}   \Big\vert  \leq 1  +2 \left\vert  a_{k+1} \right\vert^2  + \left\vert a_{2 k+1}  \right\vert \left\vert a_{2 k+1}-2 a_{k+1}^2 \right\vert,
\end{align*}
   gives the asserted bound.

   From (\ref{b2H3}) and (\ref{b3H3}), for the mapping $F$ defined by (\ref{ExtOmega}), it is evident that
   $$  \left\vert   1 -a_{2 k+1}^2 -2 a_{k+1}^2 +2 a_{k+1}^2 a_{2 k+1} \right\vert = 1+ \frac{8}{k^2} + \frac{(k+2)(6-k)}{k^4}. $$
   Hence, the bound is sharp for $1\leq k \leq 3.$
\end{proof}
%%%%%%%%%%%%%%%%%%%%%%%%%%%%%%%%%%%%%%%%%%%%%%%%%%%%%%%%%%%%%%%%%%%%%%%%%%%%%%%%%%%%%%%%%%%%%%%%%%%%%%%%%%%%%%%%%%
\begin{remark}
    When $n=1$ and $\Omega=\mathbb{U}$, Theorem~\ref{thm6} is equivalent to Theorem~\ref{thmU2}.
\end{remark}
%%%%%%%%%%%%%%%%%%%%%%%%%%%%%%%%%%%%%%%%%%%%%%%%%%%%%%%%%%%%%%%%%%%%%%%%%%%%%%%%%%%%%%%%%%%%%%%%%%%%%%%%%%%%%%%%%
\section*{Declarations}
\subsection*{Conflict of interest}
	The authors declare that they have no conflict of interest.
\subsection*{Author Contribution}
    Each author contributed equally to the research and preparation of the manuscript.
\subsection*{Data Availability} Not Applicable.
%%%%%%%%%%%%%%%%%%%%%%%%%%%%%%%%%%%%%%%%%%%%%%%%%%%%%%%%%%
\noindent
%%%%%%%%%%%%%%%%%%%%%%%%%%%%%%%%%%%%%%%%%%%%%%%%%%%%%%%%%%%%%%%%%%%%%%%%%%%%%%%%%%%%%%%%%%%%%%%%%%%

\end{document}